\begin{document}
\title{Burning a binary tree and its generalization}
\titlerunning{Burning a binary tree and its generalization}
%

\author{Sandip Das \and
Sk Samim Islam \and
Ritam M Mitra \and
Sanchita Paul\thanks{Corresponding author}}

\authorrunning{Das, Islam, Mitra, and Paul}
%

\institute{Indian Statistical Institute, Kolkata, India}


%
\maketitle              
\begin{abstract}
 Graph burning is a graph process that models the spread of
social contagion. Initially, all the vertices of a graph $G$ are unburnt. At each step, an unburnt vertex is put on fire and the fire from burnt vertices of the previous step spreads to their adjacent unburnt vertices. This process continues till all the vertices are burnt. The burning number $b(G)$ of the graph $G$ is the minimum number of steps required to burn all the vertices in the graph. The burning number conjecture by Bonato et al. states that for a connected graph $G$ of order $n$, its burning number $b(G) \leq \lceil \sqrt{n} \rceil$. It is easy to observe that in order to burn a graph it is enough to burn its spanning tree. Hence it suffices to prove that for any tree $T$ of order $n$, its burning number $b(T) \leq \lceil \sqrt{n} \rceil$ where $T$ is the spanning tree of $G$. It was proved in 2018 that $b(T) \leq \lceil \sqrt{n + n_2 + 1/4} +1/2 \rceil$ for a tree $T$ where $n_2$ is the number of degree $2$ vertices in $T$. In this paper, we provide an algorithm to burn a tree and we improve the existing bound using this algorithm. We prove that $b(T)\leq \lceil \sqrt{n + n_2 + 8}\rceil -1$ which is an improved bound for $n\geq 50$. 
We also provide an algorithm to burn some subclasses of the binary tree and prove the burning number conjecture for the same.
\keywords{Binary Tree  \and Graph Burning \and Algorithm.} 
\end{abstract}

\section{Introduction}
\label{sec:introduction}

Graph burning is a process that captures the spread of social contagion and was introduced by Bonato, Janssen, and Roshanbin \cite{bonato2014burning} in 2014.  The way ideas and feelings spread among people on social networks is a hot topic in social network research. For instance, check out these studies \cite{banerjee2014epidemic,domingos2001mining,kempe2003maximizing,kempe2005influential,mossel2007submodularity,richardson2002mining}. A recent study focused on how emotions spread on Facebook \cite{kramer2014experimental}. What's interesting is that it found that the structure of the network itself is crucial. Surprisingly, you don't need to meet someone in person or see their body language for emotions  to spread. Instead, people in the network share these feelings with their friends or followers, and it keeps spreading over time. Now, here's the question: If you wanted to make this emotional spread happen as quickly as possible across the whole network, who should you start with, and in what order? To address this problem, we study graph burning. We can think that each person or profile is a vertex of a graph. Two people can share their feelings in one step if there is an edge between them. We say a vertex is burnt if the feeling is spread to the vertex. 

 Below, we describe the process of burning a simple graph $G(V,E)$ where $|V|=n$. Graph burning consists of discrete steps. After each step, each vertex is either burnt or unburnt, once a vertex is burnt it remains burnt till the end. Initially, all the vertices are unburnt. In the first step, we burn a vertex. At each subsequent step, two things happen: first, one new unburned vertex is chosen to burn; second, the fire spreads from each burnt vertex of the previous step to its neighboring vertices and burns the unburned vertices from the neighbors. If a vertex is burned,
then it remains in that state until the end of the process.  The process ends when all the vertices are burnt. The \textit{burning number}, denoted by $b(G)$, is the minimum number of steps taken for this process to end.
The burning problem asks, given a graph G and an integer $k \geq 2$, whether $b(G) \leq k$. 
An intuitive way to look at this process is to cover the vertices of the graph $G$ by $k$ balls of radius $ 0, 1, \hdots,k - 1 $, placed at appropriate vertices such that $b(G)$ is minimized.
A ball of radius $r$ placed at a vertex $v$ can cover vertices that are at a distance of at most $r$ from $v$.
For example, it is straightforward to see that $b(K_n) = 2$. However, even for a relatively simple graph such as the path $P_n$ on $n$ nodes, computing the burning number is more complex; in fact,
$b(P_n) = \lceil \sqrt{n}\rceil$ as \cite{bonato2014burning} proved. Suppose that in the process of burning a graph $G$, we eventually burn the whole graph $G$ in $k$ steps, and for each $i$, $1 \leq i \leq k$, we denote the node where we set the fire in the $i$th step by $x_i$. We call such a node simply a source of fire. The sequence $(x_1, x_2, \hdots ,x_k)$ is called a burning sequence for $G$. 

Graph burning can be likened to the way viruses propagate within populations. When left uncontrolled, a limited number of infected individuals within a population can lead to the widespread transmission of the virus. Similarly, one can draw an analogy to a forest fire, which, when left to burn without intervention, can devastate entire forests across a vast expanse. 

Bessy et al. \cite{bessy2017burning} showed that  the Graph Burning problem is
NP-complete when restricted to the trees of maximum degree three. This implies that the burning graph problem is NP-complete for binary trees, chordal graphs, bipartite graphs, planar graphs, and disconnected graphs. Moreover, they showed the NP-completeness of the
burning problem even for trees with a structure as simple as spider graphs, and also for disconnected
graphs such as path-forests. Furthermore, there is a polynomial time
approximation algorithm with approximation factor $3$ for general graphs. Hence, determining the precise burning number of a graph is currently not the primary focus. Now, a question can be raised: Is there any tight  relation between the burning number and the number of vertices of a graph?  
Bonato,  Janssen, and Roshanbin \cite{bonato2016burn} proposed the Burning Number Conjecture that relates these two numbers of a graph. 

\smallskip

\noindent \textbf{Burning Number Conjecture (BNC):}\label{bnc}  Let $G$ be a connected graph of order $n$, then $b(G)\leq \lceil \sqrt{n}\rceil$.

\smallskip

\noindent In order to burn a connected graph, it is sufficient to burn its spanning trees \cite{bonato2016burn}. Therefore, in the study of graph burning, 
much focus has been spent on the trees.  Note that, if $T$ is a spanning tree of a graph $G$, then  $b(G)\leq b(T)$, since the number of steps required to burn  $T$ is at least  the number of steps required to burn  $G$. Therefore, if the BNC holds for all the trees, then it definitely holds for all the graphs. Now the main challenge is to solve the BNC for the trees.  
Till now the BNC has been solved for some subclasses of trees including Spider \cite{bonato2019bounds,das2018burning}, Double spider \cite{tan2023burnability}, Caterpillars \cite{liu2020burning}.


\vspace{0.3cm}
\noindent\textbf{Our Contribution:} In this paper, we study the graph burning problem on several types of binary trees. 


A {\em perfect binary tree} is a special type of binary tree in which all the leaf nodes are at the same level and all internal nodes have exactly two children. In Section \ref{sec:Perfect Binary Tree}, we provide a relation between the height and the burning number of a perfect binary tree:

\begin{restatable}{theorem}{Perfecto}\label{perfect}
Let $T$ be a perfect binary tree of height $h$, then $b(T)=h+1$.
\end{restatable}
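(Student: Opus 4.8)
The plan is to establish the two inequalities $b(T)\le h+1$ and $b(T)\ge h+1$ separately, using throughout the ball-covering formulation of burning: a burning sequence of length $k$ exists once $V(T)$ can be covered by balls of radii $0,1,\dots,k-1$ whose centres are realisable as an admissible sequence of sources, and conversely every length-$k$ burning sequence yields such a covering (the fire lit at the $i$-th source reaches distance $k-i$ by the end).

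For the upper bound I would exploit that the root $\rho$ is a centre of $T$: every vertex lies within distance $h$ of $\rho$. Hence a single ball of radius $h$ placed at $\rho$ already covers all $2^{h+1}-1$ vertices, so the $h+1$ balls of radii $0,1,\dots,h$ certainly cover $V(T)$. To turn this into a legal burning sequence I would set $x_1=\rho$ and take $x_2,\dots,x_{h+1}$ to be distinct leaves. Since the fire started at $\rho$ reaches depth $j-1$ after step $j$, it only reaches the leaves (depth $h$) at the very last step; therefore whenever a new source must be chosen an unburnt leaf is still available, and the sequence is admissible. This gives $b(T)\le h+1$.

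For the lower bound the key idea is to count leaves rather than all vertices. Suppose $b(T)\le h$; then $V(T)$ is covered by $h$ balls of radii $0,1,\dots,h-1$. The crux is the lemma that a ball of radius $r$ contains at most $2^{r}$ of the $2^{h}$ leaves of $T$ (for $r\le h$). Granting this,
\[
\sum_{r=0}^{h-1} 2^{r} = 2^{h}-1 < 2^{h},
\]
so the $h$ balls cannot cover every leaf, a contradiction; hence $b(T)\ge h+1$, and combined with the upper bound $b(T)=h+1$.

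The main obstacle, and the step I would treat most carefully, is the leaf-counting lemma, since the extremal centre is not obvious in a finite tree (climbing upward to reach sibling subtrees competes with descending into one's own subtree). I would fix a centre $v$ at depth $d$, set $m=h-d$, and write the number of leaves within distance $r$ of $v$ as the descendant leaves (at distance $m$, contributing $2^{m}$ when $m\le r$) plus, for each ancestor reached by climbing $t$ steps, the leaves of the sibling subtree hanging off it (at distance $m+2t$, contributing $2^{m+t-1}$ when $m+2t\le r$). Summing the resulting geometric series gives $2^{\,m+\lfloor (r-m)/2\rfloor}$, and optimising the exponent $m+\lfloor (r-m)/2\rfloor$ over $m$ shows the maximum is $2^{r}$, attained exactly when $v$ sits at depth $h-r$ and the entire budget $r$ is spent descending into its subtree. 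Establishing this optimisation cleanly is the heart of the argument.
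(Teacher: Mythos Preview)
Your proof is correct and follows essentially the same strategy as the paper: the upper bound by lighting the root, and the lower bound by counting leaves in the balls $N_{r}[v]$ and observing $\sum_{r=0}^{h-1}2^{r}=2^{h}-1<2^{h}$. The paper simply asserts that $N_{\alpha-j}[x_j]$ contains at most $2^{\alpha-j}$ leaves, whereas you actually prove this lemma via the depth-analysis and geometric sum; likewise you are more careful than the paper in verifying that the later sources $x_2,\dots,x_{h+1}$ can be chosen unburnt, but neither of these constitutes a genuinely different route.
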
 

\noindent Using  Theorem \ref{perfect}, we show that the BNC is true for the perfect binary tree. 

\smallskip

Section \ref{sec:Complete Binary Tree} is based on the study of the graph burning problem on the complete binary tree. A {\em complete binary tree} is a binary tree in which every level, except the last, is completely filled, and all nodes in the last level are filled from as left as possible. We provide a relation between the height and the burning number of a  complete binary tree:

\begin{restatable}{theorem}{pft}\label{pft}
Let $T$ be a binary tree of height $h$ which needs exactly $1$ more leaf to be perfect. Then $b(T)=h+1$.
\end{restatable}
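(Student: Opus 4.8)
The plan is to prove the two inequalities $b(T)\le h+1$ and $b(T)\ge h+1$ separately, with essentially all the difficulty concentrated in the lower bound. For the upper bound I would use the trivial strategy that also underlies Theorem~\ref{perfect}: take the root of $T$ as the first source of fire. Since $T$ has height $h$, every vertex lies within distance $h$ of the root, so after $h$ further steps the fire started at the root has already spread to all of $T$. This burns $T$ in $h+1$ steps no matter how the remaining sources are chosen, giving $b(T)\le h+1$.

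For the lower bound I would argue by contradiction, assuming $b(T)=h$ and deriving an impossible covering. I model a burning in $h$ steps as a placement of balls of radii $0,1,\dots,h-1$ whose union is all of $V(T)$. The engine of the proof is the extremal lemma that, in a binary tree, a ball of radius $r$ contains at most $2^r$ bottom-level leaves, with equality only when it is centered at a vertex of height exactly $r$ whose subtree is a \emph{perfect} binary tree of height $r$. Since $T$ is a perfect binary tree of height $h$ with one bottom leaf deleted, it has exactly $2^h-1$ bottom leaves, and $\sum_{r=0}^{h-1}2^r=2^h-1$. Hence the leaf-coverage must be \emph{exactly} tight: every ball must attain its maximum of $2^r$ and the covered leaf-sets must be pairwise disjoint. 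This rigidity is what pins every center down to a unique position.

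I would then run the resulting descent. Tightness forces the radius-$(h-1)$ ball to sit at the child of the root whose subtree is perfect (the deleted leaf lies in the sibling subtree), and a short distance computation shows this ball covers that whole half of $T$, the root, and all but the bottom \emph{two} levels of the deficient half; so the uncovered set is exactly the bottom two levels of the deficient height-$(h-1)$ subtree. Applying the same reasoning one level down, the radius-$(h-2)$ ball is forced onto the perfect child of that subtree and again peels off one level, leaving the bottom two levels of a deficient height-$(h-2)$ subtree uncovered, and so on. After the radius-$1$ ball the only uncovered vertices form a deficient height-$1$ subtree, namely a single internal vertex together with its unique leaf child: two vertices that must now both be covered by the radius-$0$ ball. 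This is impossible, so $b(T)\ge h+1$, and combined with the upper bound we obtain $b(T)=h+1$.

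The main obstacle I anticipate is making the two rigidity claims airtight. First, the extremal characterization of leaf-coverage (that $2^r$ is attained \emph{only} at a perfect height-$r$ subtree root) needs care, since a priori a ball could gather leaves by routing up through an ancestor and back down; the computation that the number of bottom leaves within distance $r$ of a height-$m$ vertex equals $2^{\,m+\lfloor (r-m)/2\rfloor}$, maximized uniquely at $m=r$, is what rules this out. Second, I must track the uncovered set precisely through the descent; the crux is the off-by-one in the distance from a ball's center up to its subtree root and back down, which is exactly what leaves two full levels — and ultimately one stray internal vertex — beyond the reach of every ball. Verifying that it is always the deficient side that cannot be absorbed, independently of which bottom leaf was deleted, completes the argument.
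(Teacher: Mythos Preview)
Your argument is correct, but it takes a genuinely different route from the paper's. Both proofs share the easy upper bound (fire at the root), and both begin the lower bound with the observation that a radius-$r$ ball in a height-$h$ binary tree meets at most $2^r$ bottom leaves. From there the two diverge.

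You count only the bottom leaves, obtain the exact equality $\sum_{r=0}^{h-1}2^r=2^h-1$, and then exploit the resulting rigidity: every ball must achieve its maximum and the leaf-sets must be pairwise disjoint, which pins each center to the perfect child at successive levels and finally leaves a two-vertex residue for the radius-$0$ ball. The paper sidesteps the whole descent by counting \emph{two} levels at once. It bounds, for each source $x_i$, the number of leaves \emph{and} parent-leaves (depth-$(h{-}1)$ vertices) in $N_{h-i}[x_i]$ by $2^{h-i}+2^{h-i-1}$; summing gives at most $2^h+2^{h-1}-2$ such vertices covered, whereas $T$ contains $(2^h-1)+2^{h-1}=2^h+2^{h-1}-1$ of them, an immediate contradiction with no structural analysis needed.

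The paper's trick of aggregating the bottom two levels turns your tight inequality into a strict one in a single stroke, so its proof is considerably shorter. Your approach is more laborious but also more informative structurally: it locates the obstruction precisely (the deficient branch at every scale) and would adapt more readily to variants with more missing leaves or to sharper uniqueness statements about optimal burning sequences.
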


\begin{restatable}{theorem}{cbtt}\label{cbtt}
    Let $T$ be a complete binary tree of height $h$ which does not have at least $2$ leaves in it's last level. Then $b(T)=h$.
\end{restatable}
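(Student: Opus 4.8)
The plan is to prove the two inequalities $b(T)\ge h$ and $b(T)\le h$ separately, throughout using the standard reformulation that a burning sequence of length $k$ is the same as a family of balls of radii $0,1,\dots,k-1$ whose union is $V(T)$. For the lower bound I would exploit that, since $T$ is complete of height $h$ with a non-empty last level, all of levels $0,\dots,h-1$ are present and induce a perfect binary tree $P$ of height $h-1$ containing the root. The crucial observation is that $P$ is isometrically embedded: for $u,v\in P$ the $u$--$v$ path stays within levels $\le h-1$, so $d_T(u,v)=d_P(u,v)$, and a ball of $T$ of radius $\rho$ centred at a level-$h$ leaf meets $P$ in a ball of radius $\rho-1$ about its parent. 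Hence, if $T$ admitted a burning sequence of length $h-1$ (radii $0,\dots,h-2$), restricting each ball to $P$ and, where necessary, re-enlarging the balls centred at level $h$ back to their nominal radius would cover $P$ with balls of radii exactly $0,\dots,h-2$; that is $b(P)\le h-1$, contradicting $b(P)=h$ from Theorem \ref{perfect}. Thus $b(T)\ge h$.

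For the upper bound I would argue by induction on $h$, checking small $h$ directly as base cases and writing $\ell$ for the number of vertices in the last level. Let $c_L,c_R$ be the children of the root and $L,R$ the subtrees they induce; since the last level fills from the left, $L$ has height at most $h-1$, so a single ball of radius $h-1$ centred at $c_L$ already covers all of $L$ together with the root. It then remains to burn $R$ with the $h-1$ remaining fires, i.e. with radii $0,\dots,h-2$, and concatenating $c_L$ with a length-$(h-1)$ burning sequence of $R$ yields a length-$h$ burning sequence of $T$. The left-to-right filling forces exactly two cases for $R$: either $\ell\le 2^{h-1}$, in which case $R$ is a perfect binary tree of height $h-2$ and $b(R)=h-1$ by Theorem \ref{perfect}; or $\ell>2^{h-1}$, in which case $L$ is perfect, all the missing leaves lie in $R$, so $R$ is a complete binary tree of height $h-1$ missing at least two leaves and $b(R)=h-1$ by the inductive hypothesis. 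Either way $b(R)\le h-1$, giving $b(T)\le h$.

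The step I expect to carry the real weight is verifying $b(R)\le h-1$, since this is precisely where the hypothesis of at least two missing leaves is indispensable: if only one leaf (or none) were missing, the corresponding right subtree would be perfect of height $h-1$, or complete of height $h-1$ missing a single leaf, for which Theorem \ref{perfect} and Theorem \ref{pft} give $b(R)=h$, and a radius-$(h-1)$ ball together with radii $0,\dots,h-2$ can no longer reach the bottom level of $R$ --- exactly the reason the perfect and singly-deficient trees burn in $h+1$ rather than $h$ steps. I would therefore take care to confirm that the deficiency of at least two leaves is inherited by $R$ at every recursive step, so the induction never descends into a perfect or singly-deficient subtree, and to nail down the isometry claims in the lower bound, where the only subtlety is handling balls whose centres lie on the last level.
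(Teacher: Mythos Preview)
Your proof is correct. The lower bound is exactly the paper's argument---exploit the perfect binary subtree $P$ of height $h-1$ and invoke Theorem~\ref{perfect}---and your extra care about isometric embedding and re-centring balls whose centres lie on level $h$ only makes rigorous a step the paper glosses over.

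For the upper bound your route diverges from the paper's. Both begin the same way: put the first fire at one child of the root so that the radius-$(h-1)$ ball swallows that child's entire subtree together with the root. From there the paper gives an \emph{explicit} length-$h$ burning sequence for the remaining subtree $T_2$: it picks a root-to-leaf branch $u_1,\dots,u_{h-1}$ of $T_2$ that avoids level $h+1$ (such a branch exists precisely because at least two leaves are missing), sets $v_i$ to be the sibling of $u_i$ for $2\le i\le h-1$, and sets $v_h=u_{h-1}$; one then checks $\bigcup_i N_{h-i}[v_i]=V(T)$ directly. You instead argue by induction on $h$: the other subtree $R$ is either perfect of height $h-2$ (handled by Theorem~\ref{perfect}) or complete of height $h-1$ with at least two missing leaves (handled by the inductive hypothesis, since $R$ inherits a deficiency of $2^h-\ell\ge 2$). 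Your approach is cleaner and makes the role of the ``at least two missing'' hypothesis completely transparent, while the paper's has the virtue of producing the burning sequence in closed form without recursion. Either way the content is the same; your version would be a perfectly acceptable replacement.
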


\noindent Using Theorem \ref{pft} and Theorem \ref{cbtt}, we show that the BNC is true for the complete binary tree.
\smallskip

A {\em full binary tree} is a binary tree in which all of the nodes have either $0$ or $2$ children. In other words, except for the leaf nodes, all other nodes have two children, and it is not necessary for all leaf nodes to be at the same level. In section \ref{sec:Full Binary Tree}, we show that the burning number of a full binary tree which is not perfect is upper bounded by its height. We state that in the following theorem:

\begin{restatable}{theorem}{FBTNP}\label{FBTNP}
Let $T$ be a full binary tree of height $h$, which is not perfect, then $b(T)\leq h$.
\end{restatable}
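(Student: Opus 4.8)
The plan is to reformulate burning in $h$ steps as a covering problem: $b(T)\le h$ holds precisely when $V(T)$ can be covered by $h$ balls of radii $h-1,h-2,\dots,1,0$, where a ball of radius $\rho$ centered at $v$ is the set of vertices within distance $\rho$ of $v$ (the radius-$\rho$ ball is the fire lit at step $h-\rho$, which has spread for $h-\rho$ steps by the end). I would prove the statement by strong induction on the height $h$. For $h\le 1$ there is no non-perfect full binary tree, so there is nothing to prove and the induction is well founded; the substantive work is the inductive step, which I set up for all $h\ge 2$.

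For the inductive step, let $r$ be the root of $T$ with children $c_L,c_R$ rooting the full binary subtrees $T_L,T_R$ of heights $h_L,h_R$; since $T$ has height $h$ we may assume $h_L=h-1\ge h_R$. The key move is to spend the single largest ball, of radius $h-1$, centered at $c_L$: because $T_L$ has height $h-1$, this ball covers all of $T_L$, it covers $r$ (at distance $1$), and it reaches into $T_R$ down to depth $h-3$. I would then split on $h_R$. If $h_R\le h-3$, the big ball already covers $T_R$ as well and we are done. If $h_R=h-2$, the big ball misses only the bottom level of $T_R$, which I patch by adding a second ball, of radius $h-2$ centered at $c_R$, covering all of $T_R$; the remaining radii go unused.

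The remaining and genuinely delicate case is $h_R=h_L=h-1$. Here I would still cover $T_L\cup\{r\}$ with the radius-$(h-1)$ ball as above, and cover $T_R$ using the leftover radii $\{0,1,\dots,h-2\}$, i.e.\ by an $(h-1)$-step burning of $T_R$ supplied by the induction hypothesis; since $T_R$ is a subtree of the tree $T$, the unique path between two of its vertices stays inside it, so the balls realizing this burning within $T_R$ are contained in the corresponding balls within $T$ and hence still cover the required vertices. The obstacle is that the induction hypothesis applies to $T_R$ only if $T_R$ is not perfect, whereas a perfect subtree of height $h-1$ cannot be burnt in $h-1$ steps by Theorem~\ref{perfect}. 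This is exactly where non-perfectness of $T$ is used: since $T$ is not perfect and both subtrees have height $h-1$, at least one of them is not perfect, and I would choose that one to play the role of $T_R$ (to be burnt recursively), assigning the other, possibly perfect, subtree to $T_L$, where it is harmlessly swallowed whole by the single largest ball.

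Finally I would translate the covers produced in each case back into a length-$h$ burning sequence, lighting the center of the radius-$\rho$ ball at step $h-\rho$ and filling any unused steps with arbitrary vertices, which yields $b(T)\le h$ and closes the induction. I expect the main difficulty to lie entirely in the equal-height case: one must argue that the recursion only ever descends into a non-perfect subtree, so that the impossible subproblem of burning a perfect height-$(h-1)$ tree in $h-1$ steps never arises, while the off-by-one reach of the largest ball in the $h_R=h-2$ case is handled by the single extra ball.
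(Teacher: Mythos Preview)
Your proof is correct and follows essentially the same strategy as the paper's: induct on the height, spend the radius-$(h-1)$ ball at the root of a height-$(h-1)$ subtree to cover that subtree together with the root, and burn the remaining subtree in $h-1$ steps either by the induction hypothesis (when it is not perfect) or directly (when it is perfect of height at most $h-2$). Your case split on $h_R$ is a bit finer than the paper's, but the core idea---and in particular the use of non-perfectness of $T$ to ensure the recursive call never lands on a perfect subtree of height $h-1$---is identical.
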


\noindent In the same section, we provide an algorithm to burn the full binary tree which is not perfect, that yields the following Theorem which says the BNC is true for the same:

\begin{restatable}{theorem}{firstalgo}\label{firstalgo}
Let $T$ be a full binary tree of $n$ vertices, which is not perfect. Then we have $b(T)\leq \lceil \sqrt{n}\rceil $.
\end{restatable}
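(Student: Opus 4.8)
The plan is to reduce the statement to the ball-covering characterisation of the burning number: $b(T)\le k$ exactly when $V(T)$ can be covered by $k$ balls of radii $0,1,\dots,k-1$ centred at suitably chosen vertices, and conversely any such cover can be read off as a burning sequence by setting the largest-radius ball on fire first. With $k=\lceil\sqrt{n}\rceil$, the goal becomes: cover a non-perfect full binary tree $T$ on $n$ vertices by $k$ balls of radii $0,1,\dots,k-1$. I would then split the argument according to how the height $h$ compares with $k$.

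If $h\le k$, there is nothing new to do: Theorem \ref{FBTNP} already burns $T$ in at most $h\le k=\lceil\sqrt{n}\rceil$ steps, so $b(T)\le\lceil\sqrt{n}\rceil$ immediately. This regime is exactly $n\ge(h-1)^2+1$, where the crude height bound on its own is strong enough.

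The substantive case is the thin regime $h>k$, i.e. $n<h^2$, in which $T$ is forced to be long and sparse. Here I would run the algorithm along the spine $s_0,s_1,\dots,s_h$ (a longest root-to-leaf path), sweeping it from the bottom with balls of decreasing radius in the manner used for paths and caterpillars: a ball of radius $r$ placed at a spine vertex covers a length-$(2r+1)$ interval of the spine together with every hanging vertex within distance $r$. The radii $0,1,\dots,k-1$ then cover a spine stretch of total length $\sum_{r=0}^{k-1}(2r+1)=k^2\ge n$, which is the quantitative heart of the bound. The remaining work is to show that the subtrees hanging off the spine are absorbed essentially for free: because $T$ is thin ($n<h^2$), these subtrees cannot simultaneously be numerous and deep, so each should either be swallowed by the ball covering its attachment point or be peeled off and treated recursively as a smaller full binary tree to which the induction hypothesis applies.

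I expect the main obstacle to be precisely this control of the hanging subtrees in the thin regime: a single spine ball of radius $r$ reaches only depth $r$ into a branch, so a deep branch forces part of the radius budget to be spent off the spine rather than on advancing along it, and one must argue that $n<h^2$ keeps the total such expenditure below $k$. A clean way to organise this is strong induction on $n$: locate a vertex $v$ whose subtree $T_v$ has spine length in a chosen window, cover $T_v$ with one ball of the appropriate radius, delete it, and apply the hypothesis to the smaller (still full) remainder, taking care of the extra vertex needed to keep that remainder non-perfect. A secondary technical point is to confirm that the constructed cover realises a genuine burning sequence, i.e. that each chosen source is still unburnt when its fire is set; placing the balls in order of decreasing radius makes this routine, but it should be verified.
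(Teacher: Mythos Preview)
Your split into $h\le k$ and $h>k$, with Theorem~\ref{FBTNP} dispatching the first case, matches the paper. For $h>k$ your closing suggestion---strong induction on $n$, peeling off a subtree covered by the largest available ball and recursing on the remainder---is exactly the paper's route, carried out along a diametral path $u_0,\dots,u_d$.

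The gap is the one you flag but do not close. Centring the radius-$(k-1)$ ball at $u_{k-1}$ does swallow every side branch attached at $u_0,\dots,u_{k-1}$ (the branch at $u_i$ has height at most $i$ by diametrality, so its farthest leaf sits at distance $(k-1-i)+i=k-1$ from $u_{k-1}$); the trouble is the branch hanging at $u_k$, which the diametral constraint allows to have height $k-1$ or even $k$. In that configuration no single ball both severs a piece of size $\ge 2k$ (the amount needed so that the remainder drops from $\le(t+1)^2$ to $\le t^2$) and leaves behind a connected full binary tree. Your caterpillar sweep and your one-ball-per-step induction both stall here. The paper's resolution is a short case split: when the $u_k$-branch is tall it spends \emph{two} sources simultaneously, placing $v_1$ at the first vertex of that branch and $v_2$ at $u_{k-2}$, which burns at least $4k-4$ vertices so that the remainder has at most $(t+1)^2-4t\le(t-1)^2$ vertices and the induction continues with budget $t-1$ rather than $t$. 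Without this two-ball manoeuvre (or an equivalent device) the inductive step does not close.

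A minor correction to your last remark: the care needed for the remainder is not that it stay non-perfect but that it stay \emph{full} and connected---deleting a whole subtree at a child of some $u_i$ would leave $u_i$ with a single child. The paper handles this by retaining the cut vertex ($u_{k-1}$ or $u_k$) as a new leaf of the modified tree.
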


\noindent Moreover, we provide a tighter bound of the burning number for the full binary tree which is not perfect having the number of vertices more than $17$. We state that in the following theorem:

\begin{restatable}{theorem}{secondalgo}\label{secondalgo}
Let $T$ be a full binary tree on $n$ vertices, which is not perfect, then $b(T)\leq \lceil \sqrt{n+9}\rceil -1$. 
\end{restatable}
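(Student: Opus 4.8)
The plan is to deduce this bound directly from the general tree estimate announced in the introduction, namely $b(T)\le \lceil\sqrt{n+n_2+8}\rceil-1$, by exploiting that a full binary tree is extremely economical in its degree-$2$ vertices. First I would record the degree structure of a full binary tree $T$: every internal vertex has exactly two children, so the root has degree $2$ (two children and no parent), every non-root internal vertex has degree $3$ (one parent and two children), and every leaf has degree $1$. Consequently the root is the \emph{unique} vertex of degree $2$, and hence $n_2=1$ for every full binary tree on at least three vertices.

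Substituting $n_2=1$ into the general bound then gives
\[
b(T)\le \left\lceil\sqrt{n+n_2+8}\right\rceil-1=\left\lceil\sqrt{n+9}\right\rceil-1,
\]
which is exactly the claimed inequality. In this reduction the hypothesis that $T$ is not perfect is not actually needed; it matters only for the comparison against Theorem \ref{firstalgo}, since the gain over $\lceil\sqrt{n}\rceil$ becomes effective only once $n$ is sufficiently large (the threshold quoted in the introduction), while for the remaining small non-perfect full binary trees Theorem \ref{FBTNP} already controls $b(T)$ through the height.

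If instead one prefers a self-contained argument that does not invoke the general tree bound, I would reanalyze the burning algorithm underlying Theorem \ref{firstalgo}. The idea is to cover $V(T)$ by $k$ balls of radii $0,1,\dots,k-1$, obtained by repeatedly peeling a deepest fibre of the tree, and to bound the number of vertices these balls can fail to reach. The hard part will be the counting in the tall-and-thin regime: a minimal full binary tree of height $h$ has only $2h+1$ vertices, so $h$ is of order $n/2$ and the height bound $b(T)\le h$ of Theorem \ref{FBTNP} is far too weak there. One must instead show that along the long, degree-$2$-poor spine the balls behave essentially as on a path, each radius-$r$ ball accounting for about $2r+1$ vertices, so that $\sum_{r=0}^{k-1}(2r+1)=k^2$ comfortably exceeds $n$ once $k=\lceil\sqrt{n+9}\rceil-1$. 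Sharpening this accounting enough to recover the additive $+9$ and the $-1$ over the crude $\lceil\sqrt{n}\rceil$ is the delicate step, and it is precisely where the single degree-$2$ vertex (the root) must be paid for, matching the $+n_2$ term in the general estimate.
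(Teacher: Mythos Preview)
Your primary argument is circular. In this paper the general tree estimate
\[
b(T)\le \lceil\sqrt{n+n_2+8}\rceil-1
\]
(Theorem~\ref{generaltree}) is not proved independently; it is obtained \emph{from} Theorem~\ref{secondalgo} by attaching a pendant to each non-root degree-$2$ vertex so that the resulting $(3,k)$-ary tree has $n+n_2-1$ vertices and then applying the FBTNP bound $\lceil\sqrt{(n+n_2-1)+9}\rceil-1$. So invoking Theorem~\ref{generaltree} to prove Theorem~\ref{secondalgo} assumes what you are trying to establish. Your observation that a full binary tree has $n_2=1$ is correct, but it explains the \emph{consistency} between the two statements, not a proof of either.

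Your fallback sketch is also not a proof. The paper does not obtain the $+9$ and the $-1$ by a uniform ``path-like'' accounting of $\sum_{r=0}^{k-1}(2r+1)=k^2$ vertices; that heuristic already gives only $\lceil\sqrt{n}\rceil$, and you have not indicated where the extra savings come from. The actual argument is an improved recursive burning procedure (Algorithm~\ref{Algo2}) with a substantial case analysis on the shape of the subtree hanging from the first $k$ or $k{+}1$ vertices of a diametral path. In the good cases a single source $v_1$ burns at least $2k+2$ vertices (not merely $2k-1$), and in the bad cases one spends a carefully chosen pair or triple among $\{v_1,v_2,v_{k-1},v_k\}$ to burn at least $4k$, $4k-2$, or $4k-4$ vertices, after which the remainder is handled either by the induction hypothesis or by falling back to Theorem~\ref{firstalgo}. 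These per-step gains over the naive $2k-1$ are exactly what drive the induction $m\le (t+2)^2-9 \Rightarrow$ modified tree $\le (t+1)^2-9$, and none of this structure appears in your outline.
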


  In 2018, Bessy et al. proved \cite{bessy2018bounds}  that $b(T) \leq \lceil \sqrt{n + n_2 + 1/4} +1/2 \rceil$ for a tree $T$ where $n_2$ is the number of degree $2$ vertices in $T$. Using Theorem 5, we improve this bound for the trees having the number of vertices at least $50$. The improved bound is the following: 


\begin{restatable}{theorem}{generaltree}\label{generaltree}
    Let $T$ be a tree of order n and $n_2$ be the number of degree 2 vertices. Then $b(T)\leq \lceil \sqrt{n + n_2 + 8}\rceil -1$.
\end{restatable}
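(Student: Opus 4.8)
The plan is to deduce the general bound from the full binary tree bound of Theorem~\ref{secondalgo} by a reduction. Concretely, I would build from $T$ a full binary tree $T'$ that is not perfect, satisfies $b(T)\le b(T')$, and has $|V(T')|\le n+n_2-1$ vertices. Granting these three properties, Theorem~\ref{secondalgo} together with the monotonicity of $x\mapsto\lceil\sqrt{x}\rceil$ gives $b(T)\le b(T')\le \lceil\sqrt{|V(T')|+9}\rceil-1\le \lceil\sqrt{(n+n_2-1)+9}\rceil-1=\lceil\sqrt{n+n_2+8}\rceil-1$, which is exactly the claimed inequality. Thus the whole argument reduces to constructing such a $T'$, plus the routine check that the resulting bound genuinely improves on $\lceil\sqrt{n+n_2+1/4}+1/2\rceil$ once $n\ge 50$.

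The first key step is a comparison lemma: if there is a surjection $\pi\colon V(T')\to V(T)$ that does not increase distances (that is, $d_T(\pi(a),\pi(b))\le d_{T'}(a,b)$), then $b(T)\le b(T')$. I would prove this in the ball-cover language. Given an optimal burning sequence $(y_1,\dots,y_k)$ of $T'$, where $y_i$ carries radius $k-i$, set $x_i=\pi(y_i)$. For any $u\in V(T)$ choose $a\in\pi^{-1}(u)$; some $y_i$ covers $a$, so $d_T(x_i,u)=d_T(\pi(y_i),\pi(a))\le d_{T'}(y_i,a)\le k-i$, whence $(x_1,\dots,x_k)$ burns $T$. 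When $T'$ is obtained from $T$ only by attaching new pendant material, $T$ sits inside $T'$ as a convex subtree and $\pi$ is just the nearest-point projection onto $T$, which is automatically non-expansive; this is the version I would actually use.

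The construction of $T'$ is where the extra $n_2$ vertices enter. Root $T$ at a leaf and designate one degree-$2$ vertex to serve as the (degree-$2$) root of the eventual full binary tree; to every other degree-$2$ vertex attach a single pendant leaf, promoting it to a branch vertex with two children. This attaches exactly $n_2-1$ leaves, keeps $T$ as a convex subtree, and turns each former degree-$2$ vertex into a legitimate internal node. For a tree of maximum degree three this already yields a full binary tree on $n+n_2-1$ vertices, and I would verify separately that it is not perfect (should it happen to be perfect, Theorem~\ref{perfect} gives $b(T')=h+1$ and the bound is checked by hand).

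The main obstacle is the treatment of vertices of degree at least four. Promoting the degree-$2$ vertices already consumes the entire budget of $n_2-1$ extra vertices, so resolving a high-degree vertex by the standard trick of expanding it into a caterpillar of degree-$3$ vertices would overspend. I would therefore have to argue that high degree can be handled without charging further against this budget --- either by showing that replacing such a vertex by a caterpillar and projecting back preserves $b(T)\le b(T')$ while the extra internal nodes are absorbed, or by first reducing to the subcubic case and checking that the (very loose) bound survives. Pinning down this accounting, together with the elementary but fiddly comparison establishing $\lceil\sqrt{n+n_2+8}\rceil-1\le\lceil\sqrt{n+n_2+1/4}+1/2\rceil$ precisely for $n\ge 50$, is the part I expect to demand the most care.
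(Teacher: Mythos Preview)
Your reduction---attach pendants to the degree-$2$ vertices, then invoke the FBTNP bound---is exactly the paper's idea, but the target class is wrong, and that is where your obstacle at high-degree vertices comes from. The paper does \emph{not} reduce to a full binary tree. Instead, it observes (at the start of Section~\ref{sec:generaltree}) that Algorithm~\ref{Algo2}, and hence the bound of Theorem~\ref{secondalgo}, already works verbatim for what it calls a $(3,k)$-ary tree: a rooted tree in which every internal vertex has at least two children, with no upper bound on the degree. The counting in Algorithm~\ref{Algo2} only ever uses lower bounds such as $|V(T(u_{k-1}))|\ge 2k-1$, which come from the ``at least two children'' property; extra children can only increase these counts, so high degree is harmless. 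Once you grant this extension of Theorem~\ref{secondalgo}, the construction is simply: root $T$ at one degree-$2$ vertex and hang a single pendant on each of the remaining $n_2-1$ degree-$2$ vertices. The result is a $(3,k)$-ary tree on $n+n_2-1$ vertices containing $T$, and the bound follows immediately.

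So the gap in your plan is precisely the part you flagged: expanding vertices of degree $\ge 4$ into binary caterpillars would genuinely overspend the $n_2-1$ budget, and neither of your suggested workarounds (absorbing the caterpillar vertices, or first reducing to the subcubic case) comes for free. The fix is not to force binariness at all, but to check that the proof of Theorem~\ref{secondalgo} never used it. Your comparison lemma via a non-expansive surjection is correct and is morally what underlies $b(T)\le b(T')$ here, though in the paper's setting $T$ is literally a subtree of $T'$ so the inequality is immediate.
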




\section{Preliminaries}\label{sec:Preliminaries}
Since every connected graph is spanned by a tree, burning the spanning tree is sufficient to burn the connected graph's vertices. {\em Binary trees} are the most commonly known trees, which have several applications in data structure and algorithms \cite{cormen2001introduction}. Binary trees have the characteristic feature that they are rooted trees, and every vertex has at most two children. The {\em depth} of a node in a tree is the number of edges present in the path from the root node to that node. For a binary tree $T$, any two vertices are said to be at the same {\em level} if they are at the same depth from the root. The root is considered to be at the 1st level, and any other vertex that is at level $i$ is said to be $i-1$ distant from the root.
Any non-leaf node is said to be an {\em internal node}. We call the internal nodes that are the parents of leaves as {\em parent-leaves}.
 We call one node to be the {\em sibling} of other if they have a common parent. The {\em height} of a node is the number of edges present in the path connecting that node to a leaf node. For any other definition, we follow the standard notation of West \cite{West}. 
\vspace{0.5em}

 Given a positive integer $k$, the $k$-{\em th closed neighbourhood} of $v$ in a tree $T=(V,E)$ is defined by $N_{k}[v]=\{u\in V: d(u,v)\leq k\}$.





\section{Perfect Binary Tree}\label{sec:Perfect Binary Tree}
A {\em perfect binary tree} is a special type of binary tree in which all the leaf nodes are at the same level and all internal nodes have exactly two children. In simple terms, this means that all leaf nodes are at the maximum depth of the tree. It is to be noted that a perfect binary tree of height $h$ has $2^h$ number of leaf nodes.

\Perfecto*




\begin{proof}
 $T$ has total $h+1$ levels, since $T$ has height $h$. 
    Suppose we set the fire at the root of $T$ which is at the $1$st level. At the $t$-th step, the fire completes the burning of the $t$-th level of $T$. Therefore, the burning of the whole $T$ will be completed after $h+1$ steps, since $T$ has total $h+1$ levels. Therefore,   $b(T)\leq h+1$. 

    Now we will show that $b(T)\geq h+1$. We prove this by contradiction. Suppose there is a burning sequence $S=(x_{1},x_{2},\hdots,x_{\alpha})$, where $\alpha < h+1$. After completing the burning of $T$ using this burning sequence, the fire that was set on the vertex $x_j$ can spread to the vertices of the ball $N_{\alpha-j}[x_j]$, for all $j$ where $1\leq j\leq \alpha$. Note that, $N_{\alpha-j}[x_j]$ can contain at most $2^{\alpha-j}$ leaves of $T$. Therefore, the total number of leaves that can be burned using the burning sequence $S$ is at most $\sum_{j=1}^{\alpha} 2^{\alpha-j}=2^{\alpha}-1$. But the total number of leaves of $T$ is $2^{h}$. Now, $2^{h}\geq 2^\alpha > 2^\alpha-1$. Therefore, all the leaves of $T$ can not be burned by $S$, which is a contradiction. Therefore, $b(T)\geq h+1$.  \hfill $\square$ 
\end{proof}

A perfect binary tree $T$ of height $h$ is of order $n=2^{h+1}-1$, therefore, we have $b(T)=h+1=\log_{2} (n+1)\leq \lceil \sqrt{n}\rceil$ for $n\geq 19$. This validates the burning number conjecture for this graph class. 

\section{Complete Binary Tree}\label{sec:Complete Binary Tree}

A {\em complete binary tree} is a binary tree in which every level, except the last, is completely filled, and all nodes in the last level are filled from as left as possible. The number of nodes (leaves) at the last level, ranges from $1$ to $2^h-1$, where $h$ is the height of the tree. 

\pft*
\begin{proof}
The fact that $b(T)\leq h + 1$ can be determined by placing the source of fire at the root of $T$ at the first step of burning. 
\vspace{0.3em}

We will now show that $T$ cannot be burnt in $h$ steps. On the contrary, let $(x_{1},x_{2},\hdots,\\
x_{h})$ be an arbitrary burning sequence of $T$, i.e., $b(T)\leq h$.  After completing the burning of $T$ using this burning sequence, the fire that was set on the vertex $x_i$ can spread to the vertices of the ball $N_{h-i}[x_i]$, for all $i$ where $1\leq i\leq h$. Now we show that for $x_{i}$, the total number of leaves that can be burnt from the fire spread from $x_{i}$, i.e., which are in $N_{h-i}[x_{i}]$ is at most $2^{h-i}$ in number and maximality is attained only when $x_i$ is at $i+1$ th level. If $x_i$ is at $j$ th level where $1\leq j\leq i$, then any leaf (say $l$) that is the descendant of $x_{i}$ remains unburnt from the fire spread from $x_{i}$ since, $d(x_{i},l)=h+1-j\geq h+1-i> (h-i)$. Therefore, it is necessary to place $x_{i}$ at $j (\geq i+1)$ th level to burn the leaves, and furthermore, maximality can only be attained when $j=i+1$. Note that $x_{h}$ cannot burn any more vertex except itself. Hence, the total number of leaves and parent-leaves that can be burnt by all the $x_{i}$'s up to $h$ th step of burning is at max $ \sum_{i=1}^{h-1} (2^{h-i}+2^{h-i-1})+1=2^{h}+2^{h-1}-2$. Again, since there is only one leaf is missing at the $h+1$ th level, the total number of leaves and parent leaves in $T$ is $(2^{h}-1)+2^{h-1}$. Therefore, at least $1$ vertex (leaf/parent-leaf) is left unburnt after $h$ th step. Therefore $b(T)=h+1$. \hfill $\square$ 
\end{proof}

\cbtt*
\begin{proof}
First, we show that $h$ steps are sufficient to burn $T$ in this case, i.e., $b(T)\leq h$. Let $c$ be the root of $T$ and $T_1, T_2$ be two subtrees of $T$ rooted at the two children $c_{1},c_{2}$ of $c$. Since $T$ is a complete binary tree where at least two leaves are absent at its last level, at least one among $T_{1}, T_{2}$, let's say $T_{1}$ is of height $h-1$, while $T_{2}$ is of height $h-2$ or $h-1$.  Let $u_{1},u_{2},\hdots,u_{h-1}$ be the vertices on a branch of $T_{2}$ that does not contain any leaf from $h+1$ th level (see Figure \ref{cbt})
    satisfying $d_{T}(c,u_{i})=i$ and $x_{i}$ be the sibling node of $u_{i}$. Then $d(c,x_{i})=i$. We set $v_{1}=c_{1}, v_{i}=x_{i},2\leq i\leq h-1$ and $v_h=u_{h-1}$. Now consider the sequence $(v_{1}, v_{2},\hdots, v_{h-1}, v_{h})$.  As every subtree rooted at $v_{i}, 1\leq i\leq h-1$ is of height $h-i$ or $h-i-1$ and $v_{h}$ is a leaf node of $T_2$, we get $\bigcup\limits_{i=1}^{h} N_{h-i}[v_{i}]=V(T)$. Therefore, $(v_1,v_2,\hdots,v_h)$ becomes the burning sequence of $T$. Thus, up to $h$ steps $T$ must be burnt.

\vspace{0.3em}
Now we will show that $T$ cannot be burnt in $h-1$ steps. Let $T^{\prime}$ be the perfect binary subtree of $T$ having height $h-1$. If we assume on the contrary that $T$ can be burnt in $h-1$ steps, then $T^{\prime}$ should also be burnt in $h-1$ steps. But from Theorem \ref{perfect} we know that $b(T^{\prime})=h$, which contradicts our assumption. Hence $b(T)=h$.   \hfill $\square$ 
\end{proof}

\vspace{0.5mm}

\begin{figure}[htp]
\includegraphics[clip,width=\columnwidth]{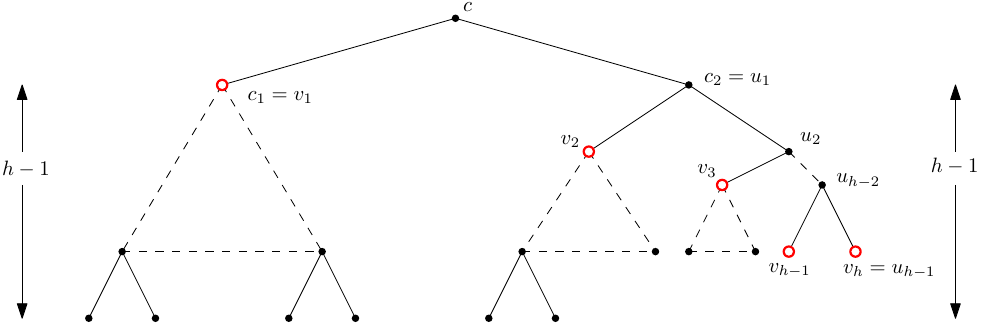}
\caption{Complete Binary Tree}\label{cbt}
\end{figure}

A complete binary tree of height $h$ that does not contain exactly one leaf in its last level is of order $ 2^{h+1}-2$. Hence $b(T)= h+1=\log_{2}(n+2)\leq \lceil\sqrt{n}\rceil$ when $n\geq 20$.  For the other complete binary trees, $n\geq 2^{h}$, therefore, $b(T)=h\leq \log_{2} n\leq \lceil\sqrt{n}\rceil$. Hence, the burning number conjecture holds true for this subclass.

\section{Full Binary Tree}\label{sec:Full Binary Tree}
  A {\em full binary tree} is a binary tree in which all of the nodes have either $0$ or $2$ children. In other words, except for the leaf nodes, all other nodes have two children, and it is not necessary for all leaf nodes to be at the same level.

\FBTNP*

\begin{proof}
Let $c$ be the root and $x_{1},x_{2}$ be the two children of $c$ in $T$. By induction hypothesis, we assume that the result is true for all such trees of height less than $h$. Now consider the subtrees $T_{1}, T_{2}$ of $T$ having $x_{1},x_{2}$ as their roots. It is easy to note that $T_{1}, T_{2}$ are two disjoint trees. Also, they cannot be perfect binary trees of height $h-1$ simultaneously, as $T$ is not perfect. Without loss of generality, let $T_1$ be any full binary tree of height $h-1$ and $T_2$ be a full binary tree of height $\leq h-1$ which is not perfect. Thus, using the algorithm stated in the proof of Theorem \ref{perfect}, $T_1$ can be burnt in $h$ steps by only placing fire at $x_1$ in the $1st$ step. Note that we do not need to set fire to any other vertex of $T_1$ to burn it in $h$ steps. Now, by the induction hypothesis, we know $b(T_2) \leq h-1$.  Also if $T_2$ is perfect and of height $\leq h-2$ then it can be burned upto $h-1$ steps by the algorithm stated in the proof of Theorem \ref{perfect}. Let the burning sequence to burn $T_2$ be $(y_1, \hdots ,y_{h-1})$. Thus, the burning sequence $(x_1, y_1, \hdots ,y_{h-1})$ is sufficient to burn $T$ upto $h$ steps. Therefore, $b(T)\leq h$. \hfill $\square$ 
\end{proof}

 \textit{ For convenience, we denote the class of full binary tree which is not perfect by the notation \textbf{FBTNP}(Full Binary Tree Not Perfect).}


\begin{proposition}\label{maximalfullbinary}
Given any positive integer $k$, there exists a maximal full binary tree $T=(V,E)$ which is not perfect satisfying $|V|=3(2^{k}-1)-2k$, that can be burnt in $k$ steps.
\end{proposition}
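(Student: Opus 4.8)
The plan is to prove the statement in two complementary parts: a counting upper bound showing that \emph{no} full binary tree burnable in $k$ steps can have more than $3(2^{k}-1)-2k$ vertices (this yields the word \emph{maximal}), and an explicit recursive construction that attains this bound (this yields existence).

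First I would establish the ball bound. Fix any vertex $v$ of any full binary tree and count vertices by their distance from $v$ via a breadth-first search rooted at $v$: the center $v$ has at most $3$ neighbours (two children and a parent), while every other vertex sends one edge back toward $v$ and hence has at most $2$ forward neighbours. Thus the number of vertices at distance exactly $d\ge 1$ from $v$ is at most $3\cdot 2^{d-1}$, giving
\[
|N_{r}[v]|\ \le\ 1+\sum_{d=1}^{r}3\cdot 2^{d-1}\ =\ 3\cdot 2^{r}-2 .
\]
If $T$ is burnt in $k$ steps, the sources ignited at steps $1,\dots,k$ spread exactly to balls of radii $k-1,k-2,\dots,0$ whose union is $V(T)$, so summing the per-ball bound yields $|V(T)|\le \sum_{i=0}^{k-1}(3\cdot 2^{i}-2)=3(2^{k}-1)-2k$. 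Hence this value is the largest attainable order, and any tree reaching it is maximal.

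Next I would build, by induction on $k$, a tree $T_k$ meeting this bound with equality; equality in the argument above forces the $k$ balls to be pairwise disjoint and each of maximum size, which dictates the shape. Let $T_1$ be a single vertex. For $k\ge 2$, take a spine $c=a_0,a_1,\dots,a_{k-1}$ with $a_{k-1}$ the root and each $a_{j-1}$ a child of $a_j$; hang a perfect binary tree of height $k-1$ below $c$; for each $j$ with $1\le j\le k-2$ give $a_j$ a second child $b_j$ rooting a perfect binary tree of height $k-2-j$; and give the root $a_{k-1}$ a second child at which a copy of $T_{k-1}$ is attached. Then I would verify: (i) \emph{fullness}, since every $a_j$ has exactly two children and all remaining vertices lie inside perfect trees or inside $T_{k-1}$; (ii) the \emph{order}, since the part other than $T_{k-1}$ is precisely a maximum ball $N_{k-1}[c]$ of size $3\cdot 2^{k-1}-2$, so $|V(T_k)|=(3\cdot 2^{k-1}-2)+|V(T_{k-1})|=3(2^{k}-1)-2k$ by induction; (iii) \emph{not perfect} for $k\ge 2$, since leaves occur at different depths (those below $c$ versus the shallow leaves of the attached $T_{k-1}$), the single vertex $T_1$ being the degenerate base; and (iv) \emph{burnability in $k$ steps}, by igniting $c$ first (its fire fills $N_{k-1}[c]$ by step $k$) and reusing the inductive $(k-1)$-step burning sequence of $T_{k-1}$ for steps $2,\dots,k$. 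Because the root $a_{k-1}$ is only reached at the final step, the fire from $c$ never enters $T_{k-1}$, so the two regions burn independently, the balls are disjoint, and the sequence is legitimate. Combining with the upper bound gives maximality.

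The main obstacle is pinning down the exact shape of the maximum ball $N_{k-1}[c]$ inside a full binary tree so that it is \emph{simultaneously} (a) of maximum size $3\cdot 2^{k-1}-2$, (b) completable to a genuine full binary tree, and (c) disjoint from the recursively attached $T_{k-1}$; in particular one must check that the away-subtrees at the ancestors $a_j$ have exactly height $k-2-j$ and that the root's second subtree lies entirely at distance $>k-1$ from $c$. Establishing the shell bound $3\cdot 2^{d-1}$ and that this shape meets it with equality is the technical heart of the argument; the order computation, the non-perfectness, and the validity of the burning sequence then all follow by a clean induction, and the upper bound of the first paragraph closes the gap to give maximality.
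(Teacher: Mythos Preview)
Your proof is correct and follows the same core idea as the paper: realise $T$ as a disjoint union of $k$ balls $N_{k-i}[v_i]$, each of the maximum possible size $3\cdot 2^{k-i}-2$ in a full binary tree, so that the total is $\sum_{i=1}^{k}(3\cdot 2^{k-i}-2)=3(2^{k}-1)-2k$.

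The presentations differ in two respects. First, the paper builds all $k$ pieces directly---each $T_i$ is a perfect binary tree of height $k-i$ rooted at $v_i$ together with a second perfect binary tree of height $k-i-1$ rooted at the third neighbour $v_i'$ of $v_i$---and then joins consecutive pieces by edges between leaves; you instead describe the first ball explicitly as a spine $a_0,\dots,a_{k-1}$ with perfect subtrees hanging off it, and then recurse by attaching $T_{k-1}$ at the root. These produce isomorphic trees, but your recursive framing makes the verification of fullness and of the burning sequence cleaner. Second, and more substantively, you supply the upper bound $|N_r[v]|\le 3\cdot 2^{r}-2$ (via the degree-$3$ BFS shell count) and deduce $|V|\le 3(2^{k}-1)-2k$ for \emph{every} full binary tree burnable in $k$ steps; the paper only asserts that ``maximality is guaranteed whenever we prevent all instances of double-burning'' without bounding the individual ball sizes, so your version actually justifies the word \emph{maximal} in the statement. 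Both proofs share the same $k=1$ degeneracy (a single vertex is perfect), which you correctly flag.
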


\begin{proof}

     \begin{figure}
         \centering
         \includegraphics[width=\textwidth]{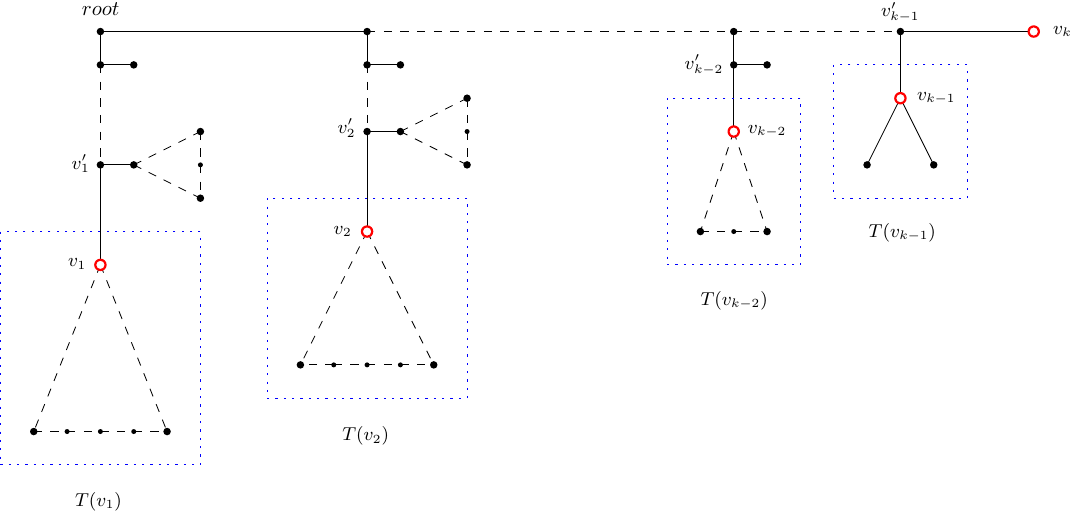}
         
         \caption{Maximal Burning} \label{MNb}
    \end{figure}



  It is sufficient to create a burning sequence $(v_{1},v_{2},\hdots,v_{k})$ in such a manner so that $N_{k-1}[v_{1}]\cup N_{k-2}[v_{2}]\cup \hdots \cup N_{0}[v_{k}]=V$ for some FBTNP tree $T$ and $|V|=3(2^{k}-1)-2k$. Maximality can be guaranteed whenever we are able to prevent all instances of double-burning in $T$, i.e., if $T$ can be constructed in such a way that $N_{k-i}[v_{i}]\cap N_{k-j}[v_{j}]=\emptyset$ for all $i\neq j$. 
\vspace{0.3em}

   First, we construct a perfect binary tree $T(v_{1})$ (say) of height $k-1$, rooted at the vertex $v_{1}$.  Let $v_{1}^{\prime}$ be the neighbor of $v_1$ apart from the two children of $v_1$ in $T(v_{1})$. Clearly, $v_{1}^{\prime} \notin V(T(v_{1}))$. Next, we create a perfect binary tree of height $k-2$ rooted at $v_{1}^{\prime}$, say $T(v_{1}^{\prime})$ disjoint from $T({v_{1}})$.
It is important to note that, $T(v_{1})$ and $T(v_{1}^{\prime})$ are two separate perfect binary trees and are connected by the edge  $v_1 v_{1}^{\prime}$. Let, $T_1 = T(v_{1}) \cup T(v_{1}^{\prime})$ be a subtree of $T$. It is easy to verify that $|N_{k-1}[v_{1}]|= |V(T_1)| =|V(T(v_1))|+|V(T({v_1}^{\prime}))|=(2^{k}-1)+(2^{k-1}-1)=2^{k}+2^{k-1}-2$ (see Figure \ref{MNb}). We construct the subtrees $T_2, T_3, \hdots T_k$ sequentially in the aforementioned procedure where $T_i = N_{k-i}[v_{i}]$ and $|T_i| = |N_{k-i}[v_{i}]| = 2^{(k-i)+1}+2^{k-i}-2$ for $2\leq i\leq k$. We create an edge between any two leaves of $T_i, T_j$ for all $i\neq j$. Thus we build $T = T_1 \cup T_2 \cup T_3 \hdots \cup T_k$.

  One can verify that $T$ is an FBTNP with the root as one of the leaves of $T(v_{1}^{\prime})$. Furthermore, the total number of vertices of $T$ is $(2^{k}+2^{k-1}-2)+\sum_{i=2}^{k} (2^{k-i+1}+2^{k-i}-2)=3 (2^{k}-1)-2k$. Since each of $T(v_i)$ and $T(v_{i}^{\prime})$ are perfect binary trees of height $k-i$ and $k-i-1$ respectively, and $v_{i}v_{i}^{\prime}\in E$, using Theorem \ref{perfect} we can burn the FBTNP $T$ in $k$ steps. The maximality of $T$ is ensured by the above construction.
  \hfill $\square$ 
\end{proof}

\subsection{Algorithm to burn an FBTNP}\label{Algo1}
\begin{figure}[ht]
     \centering
         \centering
         \includegraphics[width=\textwidth]{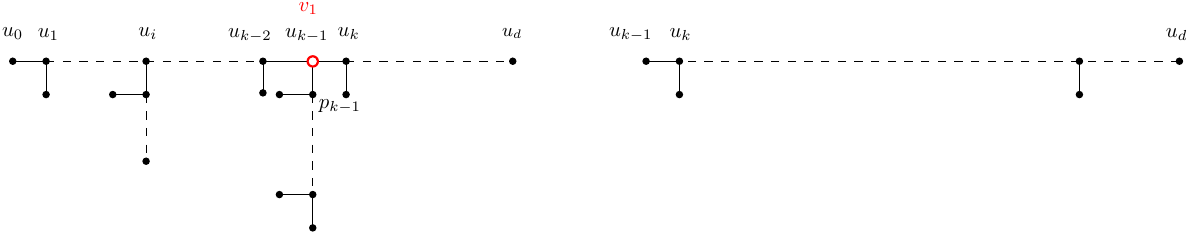}
         \label{}
     \hfill
         \centering
         \includegraphics[width=\textwidth]{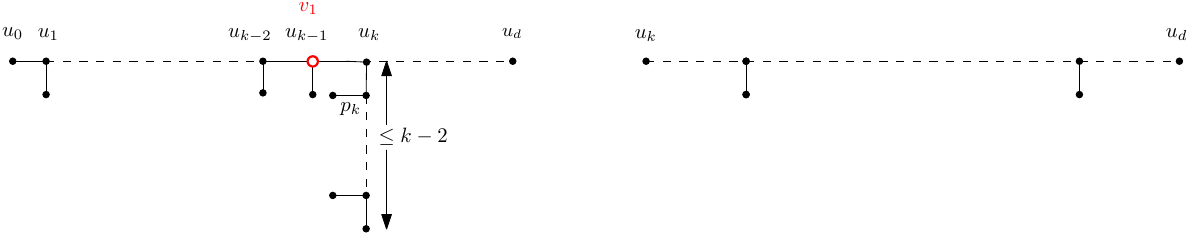}
         \label{}
     \hfill
         \centering
         \includegraphics[width=\textwidth]{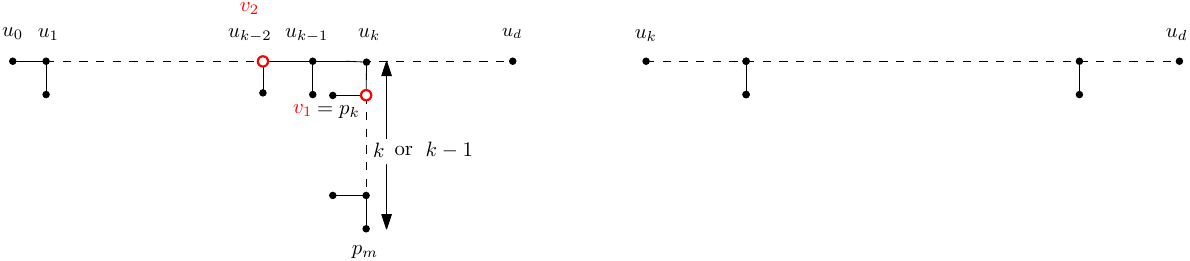}
         \label{}
     \caption{Cases in Full Binary Tree (case (a) at the top, case (b)(i) in the middle, case (b)(ii) in the bottom)}
\label{FBT}
\end{figure}

In this subsection, we propose an algorithm to burn an FBTNP which is recursive in nature. We construct a burning sequence $(v_1,\hdots,v_k)$ of an FBTNP tree $T$ of height $h$ for some $k\in \mathbb{N}$. Our idea is to burn a portion of the tree $T$ at each step by using one or two sources of fire. When $k<h$, we cut the portion from the main tree in such a way that the remaining graph becomes an FBTNP \footnote{except 
some trivial situations where after generating the sequence $(v_{1},\hdots,v_{k-2})$, the modified tree may not be an FBTNP. In these cases, it can be $P_2$ or $K_{1,3}$ and both can be burned by placing $v_{k-1}$ and $v_k$} itself with a smaller size. Otherwise, only a single source of fire spreading upto $k$ steps is sufficient to burn the whole tree. In the Theorem \ref{firstalgo}, we prove that for an FBTNP of order $n$, the value of $k=\lceil \sqrt{n}\rceil$.

\smallskip

\noindent \textbf{Algorithm: }  Let $T=(V,E)$ be an FBTNP having $P_{d+1}=(u_{0},u_{1},\hdots,u_{d})$ as the diametral path, where $d$ is the diameter and $h$ be the height of $T$.  Below we will construct a burning sequence $(v_1, v_2, \dots v_k)$  of $T$ for some $k \in \mathbb{N}$. For $0 \leq i \leq d $, we define $T(u_{i})$ to be the subtree rooted at $u_{i}$, formed by the branches of $T$ attached to the vertices $u_{0},u_{1},\hdots,u_{i}$ excluding the branch $(u_{i+1},\hdots,u_{d})$. Let $u_{c}$ be the root of $T$ and $p_{i}$ be the neighbour of $u_{i}\neq u_{c}$, where $p_{i} \neq u_{i-1}$ or $u_{i+1}$. Similarly, another subtree $T^{\prime} (u_{i})$ rooted at $u_{i}$ can be formed by the branches attached to the vertices $u_{i}, u_{i+1},\hdots,u_{d}$, excluding the branch $(u_{0},\hdots,u_{i-1})$ at $u_{i}$ in $T$.

\vspace{0.3em}
\noindent In order to burn the tree $T$ in $k$ steps, we consider the two circumstances.
\vspace{0.3em}

\noindent $1$) Let $k\geq h$. If the root $u_{c}$ lies on the diametral path $(u_{0}, \hdots, u_{c},\hdots, u_{k},\hdots,u_{d})$. Then either $u_{c}=u_{k-1}$ or it occurs left to $u_{k-1}$ along the path $P_{d+1}$. We place $v_{1}$ at $u_{c}$ to burn $T(u_{c})\cup T^{\prime}(u_{c})=T$ up to $k$ steps.

In the other situation, i.e., when the root $u_{c}$ is not on the diametral path $P_{d+1}$, then it must be on a branch attached to some $u_{l}$, where $0<l<k$ or $d-k<l<d$. Without loss of generality, we assume $0<l<k$ and $p_{m}$ be the maximum distance vertex on that branch measured from $u_{l}$.  We place $v_{1}$ at $u_{l}$ to burn $T(u_{l})\cup T^{\prime}(u_{l})=T$ up to $k$ steps.
\vspace{0.3em}

Choices of $v_{2},\hdots,v_{k}$ in this case, can be arbitrary.  

\vspace{0.3em}

\noindent $2$) Let $k<h$. We will apply the following technique to form the burning sequence $(v_{1},v_{2},\hdots,v_{k})$ of $T$. We start the algorithm from the maximum end of $P_{d+1}$, calculated from the root of $T$. If both of the endpoints of $P_{d+1}$ are of the same distance from the root, then we can start from any end. Without loss of generality, assume that $u_{0}$ is at maximum distance from the root (Refer to Figure \ref{FBT}).

\vspace{0.3em}
\noindent \textbf{Step I:}
\vspace{0.3em}

\noindent a) Let the number of vertices of the subtree $T(u_{k-2})$ be greater than equal $2k-1$. Then the order of $T(u_{k-2})\cup \{p_{k-1}\}$ becomes $\geq 2k$. One can observe that for this to happen there must exist at least one branch from a vertex $u_{i}$, $1\leq i\leq k-2$ having its height $\geq 2$ which is different from $P_{d+1}$. Now we place $v_{1}$ at $u_{k-1}$ in order to burn the vertices of $T(u_{k-2})\cup \{p_{k-1}\}$ upto $k$ steps. 

\vspace{0.3em}

 Next, we update the diametral path $P_{d+1}$ to $P_{d-k+2}=(u_{k-1},u_{k},\hdots,u_{d})$. It is important to note that we include $u_{k-1}$ in the updated diametral path so that the modified tree $T^{\prime}(u_{k})\cup \{u_{k-1}\}$ preserves all the features of the initial tree $T$. 

\vspace{0.5em}

\noindent b) Next, we consider the case when the order of the subtree $T(u_{k-2})$ is less than $2k-1$. Since $k<h$, no vertex $u_{i}$, $0\leq i\leq k$ can be the root of $T$. Therefore $|V(T(u_{k-2})|\geq 2k-3$ and hence $T(u_{k-1})\cup \{p_{k}\}$ is of order $\geq (2k-3)+3=2k$ as $T$ is an FBTNP. 

\vspace{0.3em}
The following situations may further occur.

\vspace{0.3em}
\noindent i) The branch attached to $u_{k}$ different from $P_{d+1}$ is of maximum height $k-2$. Then we place $v_{1}$ at $u_{k-1}$ to burn the vertices of $T(u_{k-1})\cup \{p_{k}\}$ upto $k$ steps. 

\vspace{0.3em}
\noindent ii) The branch attached to $u_{k}$ different from $P_{d+1}$ is of height $k-1$ or $k$. Let $p_m$ be the last vertex in the attached branch. Then $m = k$ or $k-1$. Apart from $p_{k}$, there must be at least $2k-2$ vertices in this branch. We place $v_{1}$ at $p_k$ and $v_{2}$ at $u_{k-2}$ so that up to $k$ th step the total number of vertices burnt due to the fire spread from the vertices $v_{1},v_{2}$ become $\geq 4k-4$. 

\vspace{0.5em}

After this, the diametral path $P_{d+1}$ of $T$ is updated, resulting in the new diametral path $P_{d-k+1}=(u_k,\hdots,u_d)$.  In order to ensure that the modified tree $T^{\prime}(u_{k+1})\cup \{u_{k}\}$ retains all the properties of the original tree $T$, we include the vertex $u_k$ in the updated diameter \footnote{in both of these cases, we have placed $v_{1}$ or $\{v_{1},v_{2}\}$  on the branches of $T$ in a suitable manner so that the modified tree remains a connected FBTNP and the remaining graph completely burns upto $k$ steps}.

\vspace{0.5em}

\noindent \textbf{Step II:} Repeat \textbf{Step I}.



\firstalgo*

\begin{proof}
Let $P_{d}=(u_{0},u_{1},\hdots,u_{d})$ be the diametral path of $T$. We follow the Algorithm \ref{Algo1} to burn $T$ in $k$ steps. We will use induction on $|V(T)|$ to prove the theorem.


\vspace{0.3em}
\noindent \textit{Induction Hypothesis :} Suppose the result is true for all FBTNP trees having order less than equal $t^{2}$, i.e., if $|V(T)|\leq t^{2}$ then we get $b(T)\leq t$.

\vspace{0.3em}

\noindent \textit{Inductive Step:} Consider an FBTNP tree $T$ of $m$ verices such that $t^2<m \leq (t+1)^{2}$. In order to prove the theorem, it is sufficient to show that $T$ can be burnt in $t+1$ steps. Assume $k=t+1$.

\vspace{0.3em}
First, we consider the situation when $k\geq h$. If $u_{c}$ is on $P_{d+1}$ then $h=d(u_{0},u_{c})<d(u_{c},u_{k})=k$. In the other situation, $d(u_{0},u_{c})=h<k=d(u_{0},u_{k})$ imply $d(u_{0},u_{l})<k$. Again, $d(u_{l},u_{d})<d(u_{c},u_{d})<h\leq k$. Furthermore, $d(u_{l},u_{m})<k$ follows from the definition of diametral path $P_{d+1}$. More specifically, $N_{k-1}[v_{1}]=V(T)$ as $v_{1}=u_{l}$. Therefore all the vertices of $T$ will get burnt up to $k=t+1$ th step due to the fire spread from $v_{1}$.

\vspace{0.3em}

 Next, we consider the case when $k<h$. For Case (a) and Case (b) (i), the total number of vertices burnt due to the fire spread from $v_{1}$ upto $t+1$ th step is $\geq 2(t+1) \geq 2t+2$. Moreover, since $P_{d+1}$ is the diametral path, any branch attached to $u_{k-1}$ is of maximum height $k-1$ and the branch attached to $u_{k}$ is of maximum height $k-2$ in case (b) (i). 
Therefore, after placing $v_{1}$ at $u_{k-1}$, the subtree $T(u_{k-1})$ in case (a) and the subtree $T(u_{k})$ in case (b)(i) will be completely burnt up to $k$ steps. 

\vspace{0.3em}
 One can observe that the order of the modified tree becomes less than equal to $ m-2t-2$ after updating the diameter for both of the above cases. Since $m\leq (t+1)^2$, the number of vertices  of the modified tree is $\leq (t+1)^2-2t-2=t^2-1<t^{2}$. Thus, the modified tree can be burnt in $t$ steps by the induction hypothesis.

\vspace{2mm}

 For Case (b) (ii), the total number of vertices burnt as a result of the fire spreading from $v_{1},v_{2}$ up to $t+1$ th step is $\geq 4(t+1)-4 \geq 4t$. Also, it is important to note that any branch attached to $u_{k}$ can be of maximum height $k$, as a result, the placement of $v_{1},v_{2}$ as described in the algorithm ensures the complete burnability of the subtree $T(u_{k})$ up to $k$ th step.  
Due to this burning, the order of the modified tree becomes less than equal $m-4t\leq (t+1)^2-4t\leq (t-1)^2$. Therefore, according to the induction hypothesis the modified tree can be burnt in $t-1$ steps. 


\vspace{0.3em}

\noindent \textit{Conclusion:} Thus, by induction the FBTNP $T$, having $|V(T)|\leq (t+1)^{2}$ can be burnt in $(t+1)$ steps. Hence we get $b(T)\leq \lceil \sqrt{m} \hspace{0.3em}\rceil$.  \hfill $\square$ 
\end{proof}


\subsection{Improved Bound Algorithm to burn an FBTNP}\label{Algo2}

\noindent \textbf{Algorithm :} Let $P_{d+1}=(u_{0},u_{1},\hdots,u_{d})$ be the diametral path where $d$ is the diameter and $h$ be the height of an FBTNP $T=(V,E)$. We consider the following cases, depending on which we develop a method to burn the tree $T$ in $k$ steps. The burning sequence of $T$ will be built as $(v_1, v_2, \hdots v_k)$. If $k\geq h$, then we follow the method to burn $T$ as described in Algorithm \ref{Algo1}. Now we will consider the situation when $k<h$. Let $L_{i}=(p_{i,1},p_{i,2},\hdots,p_{i,h^{\prime}})$ be the branch attached to $u_{i}$ different from $P_{d+1}$ where $1\leq i\leq d-1$ and $h^{\prime}=d(u_{i},p_{i,h^{\prime}})$ be the height of $L_{i}$ computed from $u_{i}$. To keep the notations simple, we denote the subtree rooted at  $p_{i,l}$, $1\leq l\leq h^{\prime}$ by considering all the branches which are attached to the path $(p_{i,l},\hdots,p_{i,h^{\prime}})$. Other notations will remain the same as in Algorithm \ref{Algo1}.

\vspace{0.6em}
\noindent \textbf{Step I:}
\vspace{0.3em}

\noindent \textbf{Case 1:} First, we consider the case where $T(u_{k-1})$ has order $\geq 2k+1$. Here are three situations that may happen. For this to happen, there must be a branch from some $u_i$, $1\leq i\leq k-1$ of height $\geq 2$ that is different from $P_{d+1}$.
\vspace{0.7em}

a) If the branch $L_{k-1}$ attached to $u_{k-1}$ is of height $\geq 2$, then order of $T(u_{k-1})\setminus \{u_{k-1}\}\geq 2k+2$. In this situation, we place $v_{1}$ at $u_{k-1}$ to burn $\geq 2k+2$ vertices up to $k$ steps. Next, we update the diametral path $P_{d+1}$ to $P_{d-k+2}=(u_{k-1},\hdots,u_{d})$. It is important to note that we include the vertex $u_{k-1}$ in the updated diametrical path so that the modified tree $T^{\prime}(u_{k})\cup \{u_{k-1}\}$ retains all properties of the original tree $T$.(Refer Figure \ref{IAcase1a})
\vspace{0.7em}
\begin{figure}[ht]
     \centering
    \begin{subfigure}[b]{0.45\textwidth}
         \centering
         \includegraphics[width=\textwidth]{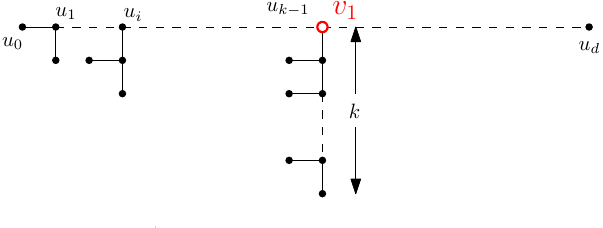}
         \caption{}
         \label{IAcase1a}
     \end{subfigure}
     \hfill
     \begin{subfigure}[b]{0.45\textwidth}
         \centering
         \includegraphics[width=\textwidth]{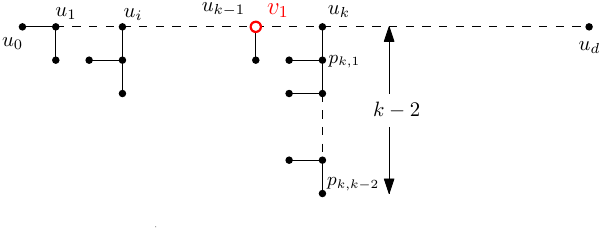}
         \caption{}
         \label{IAcase1b}
     \end{subfigure}
     \vspace{2 em}
     \begin{subfigure}[b]{0.45\textwidth}
         \centering
         \includegraphics[width=\textwidth]{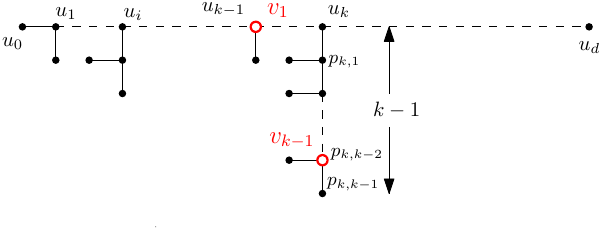}
         \caption{}
         \label{IAcase1c}
     \end{subfigure}
     \hfill
     \begin{subfigure}[b]{0.45\textwidth}
         \centering
         \includegraphics[width=\textwidth]{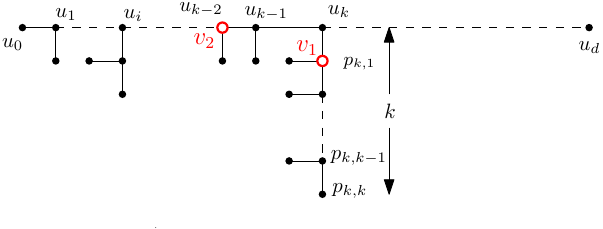}
         \caption{}
         \label{IAcase1d}
     \end{subfigure}
        \caption{Case 1}
        \label{case1}
\end{figure}
\\
b) Next, we consider the situation when $L_{k-1}$ is of height $1$, i.e., a single pendant is attached to $u_{k-1}$. We observe that $T(u_{k-1})\cup \{p_{k,1}\}$ have order $\geq 2k+2$.  

\vspace{0.7em}

If the branch $L_{k}$ attached to $u_{k}$  is of height at most $k-2$, then we place $v_{1}$ at $u_{k-1}$ to burn $T(u_{k-1})\cup 
\{p_{k,1}\}$.(see Figure \ref{IAcase1b}). 
\\
Again, if $L_{k}$ is of height $k$, then we place $v_{1}$ at $p_{k,1}$ and $v_{2}$ at $u_{k-2}$ to burn $\geq 4k$ vertices. In both of these situations,  we update the diametral path $P_{d+1}$ to $P_{d-k+1}=(u_{k},\hdots,u_{d})$ and the modified FBTNP becomes $T^{\prime}(u_{k+1})\cup \{u_{k}\}$. (Refer Figure \ref{IAcase1d})


\vspace{0.7em}

When $L_{k}$ is of height $k-1$ and furthermore there is a branch of height $\geq 2$ at $p_{k,1}$ different from $L_{k}$, then by placing $v_{1}$ at $p_{k,1}$ and $v_{2}$ at $u_{k-2}$ we burn $T(u_{k})\setminus \{u_{k}\}$ upto $k$ steps, i.e. total of $\geq 4k$ vertices, otherwise, we place $v_{1}$ at $u_{k-1}$ to burn $\geq 4k-5$ many vertices to burn $T(u_{k})\setminus T(p_{k, k-2})$ up to $k$ steps. Next, we place $v_{k-1}$ at $p_{k,k-2}$ to burn $T(p_{k,k-2})$. Therefore, total $\geq 4k-2$ vertices get burnt by burning $T(u_{k})\setminus \{u_{k}\}$ up to $k$ steps. Also, to burn the modified tree $T^{\prime}(u_{k+1})\cup \{u_{k}\}$ we use Algorithm \ref{Algo1}.
(Refer Figure \ref{IAcase1c})

 \vspace{0.7em}
\noindent \textbf{Case 2:} Next, we consider the case when $T(u_{k-1})$
have order less than $2k+1$. Since $k<h$, no vertex $u_{i}$, $0\leq i \leq k$  can be the root of $T$. Therefore, number of vertices of $T(u_{k-1})$ is $\geq 2k-1$. In fact, $|V(T(u_{k-1})|=2k-1$ as $T$ is an FBTNP. Hence the total number of vertices of $T(u_{k})\geq (2k-1)+2=2k+1$. The following situations may arise:
\vspace{0.7em}

\textbf{a)} First, we consider the situation when the branch $L_{k}$ attached to $u_{k}$ is of height $\geq 2$.

\vspace{0.7em}
\begin{figure}[h]
     \centering
     \begin{subfigure}[b]{0.4\textwidth}
         \centering
         \includegraphics[width=\textwidth]{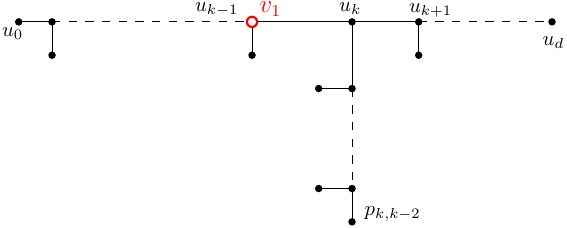}
         \caption{}
         \label{IAcase2a}
     \end{subfigure}
     \hfill
     \begin{subfigure}[b]{0.4\textwidth}
         \centering
         \includegraphics[width=\textwidth]{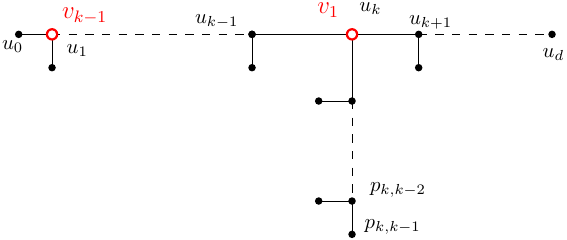}
         \caption{}
         \label{IAcase2b}
     \end{subfigure}
     \vspace{0.5 em}
     \begin{subfigure}[b]{0.4\textwidth}
         \centering
         \includegraphics[width=\textwidth]{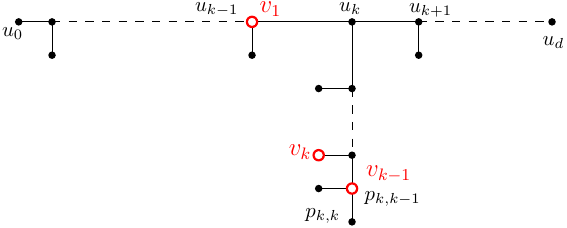}
         \caption{}
         \label{IAcase2c}
     \end{subfigure}
        \caption{Case 2a}
        \label{case1}
\end{figure}

If the branch $L_{k}$ attached to $u_{k}$ is of maximum height $k-2$, then order of $T(u_{k})\setminus \{u_{k}\}$ is $\geq (2k-1)+3=2k+2$. We place $v_{1}$ at $u_{k-1}$ to burn $\geq 2k+2$ vertices to burn $T(u_{k})\setminus \{u_{k}\}$ up to $k$ steps. Next, we update the diametral path $P_{d+1}$ to $P_{d-k+1}=(u_{k},\hdots,u_{d})$ so that the modified FBTNP becomes $T^{\prime} (u_{k+1})\cup \{u_{k}\}$. (Refer Figure \ref{IAcase2a}) 

\vspace{0.7em}

Now, consider the situation when  $L_{k}$ is of height $k$. If there is a branch of height $\geq 2$ from any $p_{k,i}$ where $1\leq i\leq k-2$, then $|V(T(p_{k,1})|\geq 2k+1$. We can modify the diametral path $P_{d+1}$ to $(p_{k,k},\hdots,p_{k,1}, u_{k}, \hdots, u_{d})$ and hence we can apply Case $1$ by taking $|V(T(p_{k,1})|\geq 2k+1$. In another situation, order of  $T(u_{k})\setminus \{u_{k}\}$ is $(2k-1)+(2k-1)=4k-2$.  Therefore, we place $v_{1}$ at $u_{k-1}$  and $v_{k-1}$ at $p_{k,k-1}$ and $v_{k}$ at the pendant attached to $p_{k,k-2}$ to burn $4k-2$ vertices  up to $k$ steps. (Refer Figure \ref{IAcase2c})

Again, if $L_{k}$ is of height $k-1$, we place $v_1$ at $u_{k}$ and $v_{k-1}$ at $u_{1}$ to burn $4k-4$ vertices of the subtree $T(u_{k})\setminus \{u_{k}\}$. In both of these circumstances, we update the diametral path to $P_{d-k+1}$ and to burn the modified FBTNP $T^{\prime}(u_{k+1})\cup \{u_{k}\}$ we apply Algorithm \ref{Algo1}. (Refer Figure \ref{IAcase2b})

\vspace{1em}
\textbf{b)} If there is only one pendant attached to $u_{k}$, i.e., $|V(T(u_{k})|=2k+1$. Let $u_{k+1}$ be the root of $T$. Then we place $v_{1}$ at $u_{k-1}$ to burn to burn $2k+2$ vertices of the subtree $T(u_{k+1})$ in $k$ steps. We update the diametral path $P_{d+1}$ to $L_{k+2}\cup(u_{k+2},\hdots,u_{d})$ and hence the modified FBTNP becomes $T^{\prime}(u_{k+2})$.

\begin{figure}[ht]
     \centering
     \begin{subfigure}[b]{0.4\textwidth}
         \centering
         \includegraphics[width=\textwidth]{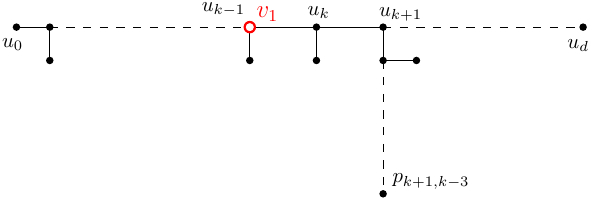}
         \caption{}
         \label{IAcase3a}
     \end{subfigure}
     \hfill
     \begin{subfigure}[b]{0.4\textwidth}
         \centering
         \includegraphics[width=\textwidth]{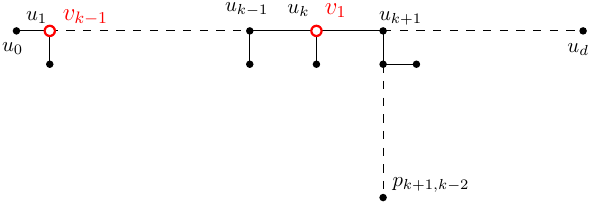}
         \caption{}
         \label{IAcase3b}
     \end{subfigure}
     \vspace{2 em}
     \begin{subfigure}[b]{0.4\textwidth}
         \centering
         \includegraphics[width=\textwidth]{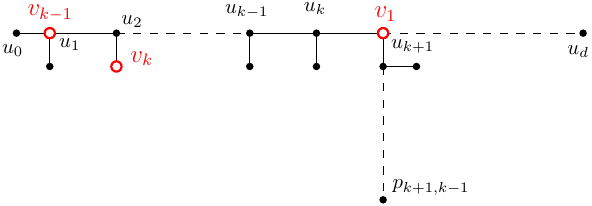}
         \caption{}
         \label{IAcase3c}
     \end{subfigure}
     \hfill
     \begin{subfigure}[b]{0.4\textwidth}
         \centering
         \includegraphics[width=\textwidth]{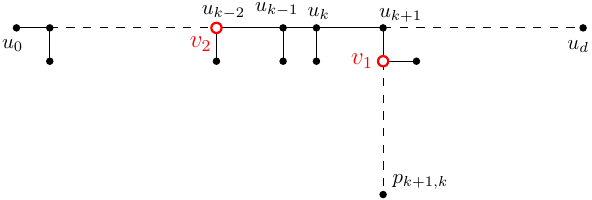}
         \caption{}
         \label{IAcase3d}
     \end{subfigure}
     \vspace{2 em}
     \begin{subfigure}[b]{0.4\textwidth}
         \centering
         \includegraphics[width=\textwidth]{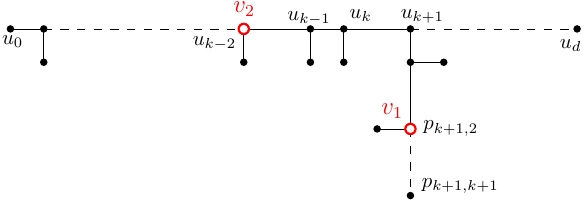}
         \caption{}
         \label{IAcase3e}
     \end{subfigure}
        \caption{Case 2b}
        \label{case3}
\end{figure}

Now, consider the situation when $u_{k+1}$ is not the root of $T$. Let $T(u_{k})\cup \{p_{k+1,1}\}$ has order $\geq 2k+2$. Since $k<h$, no vertex of the path $L_{k+1}$ can be the root of $T$.

    \vspace{0.7em}
    If the branch $L_{k+1}$ attached to $u_{k+1}$ is of height $\leq k-3$, then we place $v_{1}$ at $u_{k-1}$ to burn $T(u_{k+1})$ up to $k$ steps. Next, we update the diametral path $P_{d+1}$ to $P_{d-k}=(u_{k+1},\hdots,u_{d})$ so that the modified FBTNP becomes $T^{\prime}(u_{k+2})\cup \{u_{k+1}\}$. (Refer Figure \ref{IAcase3a})
    
\vspace{0.7em}
 
   If $L_{k+1}$ is of height $k+1$. Let the subtree $T(u_k) \cup T(p_{k+1,1})$ have order exactly equals $4k+2$.
   Then every branch from the internal nodes of $L_{k+1}$ is of height exactly equals $1$.  We place $v_{1}$ at $p_{k+1,2}$ and $v_{2}$ at $u_{k-2}$ to burn $T(u_{k+1})\setminus \{u_{k}\}$ up to $k$ steps. Hence the modified FBTNP is $T^{\prime}(u_{k+2})\cup \{u_{k+1}\}$. (Refer Figure \ref{IAcase3e})
   
   Again, if the subtree $T(u_{k})\cup T(p_{k+1,1})$ has its order $>4k+2$, then there must be some branch from the vertices of  $L_{k+1}$ of height $\geq 2$, different from $L_{k+1}$ as $|V(T(u_{k})|=2k+1$. 
    Now, if the order of the subtree $T(p_{k+1,2})\geq 2k+1$, then we update the diametral path $P_{d+1}$ to $L_{k+1}\cup (u_{k+1},\hdots, u_{d})$ and apply Case $1$, otherwise, i.e., when the order of the subtree $T(p_{{k+1},1})\geq 2k+3$ and $|V(T(p_{k+1,2}))|=2k-1$, then also we update the the diametral path $P_{d+1}$ to $L_{k+1}\cup (u_{k+1},\hdots, u_{d})$ and apply Case $2$(a).

   \vspace{0.7em}

    
    If the branch $L_{k+1}$ is of height $k$, then the order of the subtree $T(u_k) \cup T(p_{k+1,1})$ has order $\geq 4k$.
   We place $v_1$ at $p_{k+1,1}$ and $v_2$ at $u_{k-2}$ to burn $T(u_{k+1})\setminus \{u_{k+1}\}$ up to $k$ steps. After this, we update the diametral path $P_{d+1}$ to $P_{d-k}$ so that the modified tree $T^{\prime}(u_{k+2})\cup \{u_{k+1}\}$ preserves all properties of the initial tree $T$. (Refer Figure \ref{IAcase3d}) 


 \vspace{0.7em}

  When the branch $L_{k+1}$ is of height $k-1$, then if the order of the subtree $T(u_k) \cup T(p_{k+1,1})$ has order $
4k-2 (\geq 2k+2)$, then we place $v_{1}$ at $u_{k+1}$ and $v_{k-1}$ at $u_{1}$ and $v_{k}$ at the leaf of $u_{2}$ to burn $T(u_{k+1})\setminus \{u_{k}\}$ up to $k$ steps (Refer Figure \ref{IAcase3c}). After this placement, we update the diametral path to $P_{d-k}$ and the modified FBTNP becomes $T^{\prime}(u_{k+2})\cup \{u_{k+1}\}$.

 \vspace{0.7em}
  If the branch $L_{k+1}$ is of height $k-2$, then $T(u_{k})\cup T(p_{k+1,1})$  has order $\geq 4k-4$. We place $v_{1}$ at $u_{k}$ and $v_{k-1}$ at $u_{1}$ to burn $T(u_{k})\cup T(p_{k+1,1})$ up to $k$ steps (Refer Figure \ref{IAcase3b}). Next, we update the diametral path to $P_{d-k}$ so that the modified FBTNP becomes $T^{\prime} (u_{k+2})\cup \{u_{k+1}\}$.

\vspace{0.7em}
\noindent \textbf{Step II:} Repeat Step I.

\secondalgo*

\begin{proof}
Let $P_{d+1}=(u_{0},u_{1},\hdots,u_{d})$ be the diametral path of $T$. We follow the Algorithm \ref{Algo2} to burn $T$ in $k$ steps. We will use induction on $|V(T)|$ to prove the theorem.

\vspace{0.6em}
\noindent \textit{Induction Hypothesis :} Suppose, all FBTNP trees having order less than equal $(t+1)^{2}-9$, i.e. $|V(T)|\leq (t+1)^{2}-9$ can be burnt in $t$ steps.

\vspace{0.3em}
\noindent \textit{Inductive Step:} Consider a tree $T$ with $m$ vertices such that $(t+1)^2-9<m \leq (t+2)^{2}-9$. In order to prove the theorem, it is sufficient to show that $T$ can be burnt in $t+1$ steps. We set $k=t+1$.
For $k\geq h$, the verification is similar to what we did in Theorem \ref{firstalgo}.

\vspace{0.3em}
  Consider the case when $k<h$. In the first step of Algorithm \ref{Algo2}, different vertices have been chosen as sources of fire as per requirement. The choices are made among the following sources $\{v_{1},v_{2},v_{k-1},v_{k}\}$ before updating the diametral path of $T$ for the first time. We have burnt a certain number of vertices in each of the cases in such a manner so that the modified FBTNP remains connected and the remaining graph gets burnt completely up to $k$ steps. In each of the cases, we prove $k=\lceil \sqrt{n+9}\rceil -1$.

\vspace{0.3em}

 We select $v_{1}$ as a source of fire to burn $\geq 2k+2$ vertices for some cases. Therefore, the number of vertices burnt due to the fire spread from $v_{1}$ up to $t+1$ th step is $\geq 2t+4$. Therefore, the order of the modified tree becomes less than equal $m-(2t+4)$. Since $m\leq (t+2)^{2}-9$, the number of vertices of the modified tree is $\leq (t+2)^{2}-9-(2t+4)=(t+1)^{2}-10 < (t+1)^{2}-9$. Hence, it can be burnt in $t$ steps by induction hypothesis.

 \vspace{0.5em} 

      We select $v_{1}$ and $v_{2}$ at suitable places over the branches of $T$ to burn $\geq 4k$ vertices. Therefore, the number of vertices burnt up to $t+1$ th step from the fire spread from $v_{1},v_{2}$ is $\geq 4t+4$. Hence the order of the modified tree is $\leq m-(4t+4)\leq (t+2)^{2}-9-4t-4=t^{2}-9$. Therefore, the modified tree can be burnt in $t-1$ steps by induction hypothesis.

 \vspace{0.5em}

 The remaining cases have been proved with the help of Theorem \ref{firstalgo}.
\vspace{0.5em}
 
  We select $v_1$ and $v_{k-1}$ at appropriate locations over the branches of $T$ to burn $\geq 4k-4$ vertices. Therefore, the number of vertices burnt up to $t+1$ th step from the fire spread from $v_{1},v_{t}$ is $\geq 4t$. Hence the order of the modified tree is $\leq m-4t\leq (t+2)^{2}-9-4t=t^{2}-5<t^{2}$. 

Again, in some situations, to burn $4k-2$ vertices, we choose $v_{1},v_{k-1},v_{k}$ suitably at proper places on the branches of $T$. Hence, the number of vertices burnt up to $t+1$ th step from the fire spread from $v_{1},v_{t},v_{t+1}$ is $\geq 4t+2$.  Therefore, the order of the modified tree is $m-(4t+2)\leq (t+2)^{2}-9-4t=t^{2}-7<t^{2}$.
 = (

 \vspace{0.5em}
We see in both of the above cases the modified tree has order $\leq t^2 -5$. Now, we claim that $(v_2, \hdots, v_{t-2})$ is sufficient to burn the modified tree. From the proof of Theorem \ref{firstalgo}, we know, we can burn a tree of order $t^2$ in $t$ steps (i.e., $(v_1^{\prime}, \hdots, v_t^{\prime})$. Note that, the sources $v_{t-1}^{\prime}$ and $v_t^{\prime}$ burn a maximum of $5$ vertices in an FBTNP. Thus, using $(v_1^{\prime}, \hdots, v_{t-2}^{\prime})$ is enough to burn a tree using a tree of order $t^2 -5$. It is easy to observe that the burning sequence  $(v_{2},v_{3},\hdots v_{t+1})$ is equivalent to $(v_{1}^{\prime},v_{2}^{\prime},\hdots,v_{t}^{\prime})$.

 \vspace{0.5em} 
\noindent \textit{Conclusion:} Thus, by induction we are able to prove that an FBTNP $T$ having $|V(T)|\leq (t+2)^{2}-9$ can be burnt in $t+1$ steps. Hence we get $b(T)\leq \lceil \sqrt{n+9}\rceil-1$. 
\hfill $\square$ 
\end{proof}

\section{General Tree}\label{sec:generaltree}
We now extend our results to the more general tree. The $k$-ary tree is a rooted tree, where each node can hold at most $k$ number of children. We define a special type of $k$-ary tree, where each internal node can hold at least $2$ children and at most $k$ children, we name this tree as $(3,k)$-ary tree. 
Note that, Algorithm \ref{Algo2} works for this special kind of $k$-ary.

In 2018, Bessy et al. proved \cite{bessy2018bounds}  that $b(T) \leq \lceil \sqrt{n + n_2 + 1/4} +1/2 \rceil$ for a tree $T$ where $n_2$ is the number of degree $2$ vertices in $T$. Using Theorem \ref{secondalgo}, we improve this bound for the trees having the number of vertices at least $50$.
\generaltree*

\begin{proof}
   Let a tree $T$ of order $n$ has $n_2$ nodes of degree $2$. We add a pendent vertex to each of $n_2-1$ nodes leaving out $1$ node as the root, thus transforming $T$ to a $(3,k)$-ary $T^{\prime}$ of order $n+n_2-1$. We know that Algorithm \ref{Algo2} works for $(3,k)$-ary tree.
By Theorem \ref{generaltree} we can burn $T^{\prime}$ in $ \lceil \sqrt{(n+n_2-1)+9}\rceil -1 = \lceil \sqrt{n+n_2+8}\rceil -1$ steps. \hfill $\square$ 
\end{proof}
The burning number of a connected graph G equals the burning number of a spanning tree of G; see \cite{bonato2016burn}. Hence, we derive the following results for a connected graph. 
As, $n_2 \leq n-2$, thus a tree of order $n$ can be burnt in $\lceil \sqrt{2n + 6}\rceil -1$ steps which is an improvement over \cite{bessy2018bounds}.
Also, when $n_2 \leq n/3$, then $b(T) \leq \lceil \sqrt{4n/3+8}\rceil -1$ which is an improvement over \cite{bonato2021improved} under the given condition.

\section{Conclusion}
To sum up, the burning number conjecture has been proved for the 
perfect binary tree, complete binary tree, and  FBTNP. For an FBTNP, an improved algorithm has been given which in turn has improved the bound from the original conjecture.  

 The burning number conjecture could also be studied for the other interesting subclasses of trees.




\bibliography{ref.bib}






\end{document}